\theoremstyle{plain}
\numberwithin{equation}{section}
\newtheorem*{teorema}{Theorem A}
\newtheorem*{teoremab}{Theorem B}
\newtheorem{hyp}[equation]{Hypotheses}
\newtheorem{lem}[equation]{Lemma}
\newcommand{\trace}{\operatorname{Trace}}
\newcommand{\SL}{\operatorname{SL}}
\newcommand{\GL}{\operatorname{GL}}
\theoremstyle{definition}
\newtheorem{rem}[equation]{Remark}
\begin{document}	
\title{On conjugacy classes of $\GL(n,q)$ and $\SL(n,q)$}
\author{Edith Adan-Bante}

\address{Department of Mathematical Science, Northern Illinois University, 
 Watson Hall 320
DeKalb, IL 60115-2888, USA} 

\email{adanbant@math.niu.edu}

\author{John M. Harris}

\address{Department of Mathematics, University of Southern Mississippi, 
730 East Beach Boulevard, 
Long Beach, MS 39560, USA}

\email{john.m.harris@usm.edu}

\keywords{conjugacy classes, products of conjugacy classes, general linear group, matrices, special linear group}

\subjclass{20G40, 20E45}

\date{2009}
\begin{abstract} 
Let $\GL(n,q)$ be the group of $n\times n$ invertible matrices over a field with $q$ elements, and 
$\SL(n,q)$ be the group of $n\times n$ matrices with determinant 1 over a field with $q$ elements. 
We prove that the product of any two non-central conjugacy classes in $\GL(n,q)$ is the union of 
at least $q-1$ distinct conjugacy classes, and that 
 the product of any two non-central conjugacy classes in $\SL(n,q)$ is the union of 
at least $\left\lceil\frac{q}{2}\right\rceil$ distinct conjugacy classes.

\end{abstract}
\maketitle

\begin{section}{Introduction}
Let $\mathcal{G}$ be a finite group and  $A\in \mathcal{G}$. Denote by $A^{\mathcal{G}}=\{A^B\mid B\in \mathcal{G}\}$ 
 the conjugacy class of $A$ in $\mathcal{G}$.
Let $\mathcal{X}$ be a $\mathcal{G}$-invariant subset of $\mathcal{G}$, i.e. 
$\mathcal{X}^A=\{B^A\mid B\in \mathcal{X}\}=\mathcal{X}$ for all $A\in \mathcal{G}$.
  Then $\mathcal{X}$ can be expressed as a union of 
  $n$ distinct conjugacy classes of $\mathcal{G}$, for some integer $n>0$. Set
 $\eta(\mathcal{X})=n$.
 
 Given any conjugacy classes $A^{\mathcal{G}}$, $B^{\mathcal{G}}$ in $\mathcal{G}$, we can check that the 
 product $A^{\mathcal{G}} B^{\mathcal{G}}=\{XY\mid X\in A^{\mathcal{G}}, Y\in B^{\mathcal{G}}\}$ is a $\mathcal{G}$-invariant
 subset of $\mathcal{G}$ and 
 thus $A^{\mathcal{G}}B^{\mathcal{G}}$ is the union of $\eta(A^{\mathcal{G}}B^{\mathcal{G}})$ distinct conjugacy classes of 
 $\mathcal{G}$. For instance, if $A$ or $B$ is in the center ${\bf Z}(\mathcal{G})$ of $\mathcal{G}$, 
 then  $A^{\mathcal{G}}B^{\mathcal{G}}=(AB)^{\mathcal{G}}$ and thus $\eta( A^{\mathcal{G}}B^{\mathcal{G}})=1$. 
 
 Fix a prime $p$ and integers $m>0$ and $n\geq 2$. 
 Let $\mathcal{F} = \mathcal{F}(q)$ be a field with $q=p^m$ elements and $\mathcal{G}=\GL(n,q)$ be the general linear group
 of $n\times n$ invertible matrices over $\mathcal{F}$. 
 In this note, given any two conjugacy classes $A^{\mathcal{G}},B^{\mathcal{G}}$ of $\mathcal{G}$,
  we explore the relationship between $\eta(A^{\mathcal{G}}B^{\mathcal{G}})$ and $q$. 

 \begin{teorema} 
 Let   $A$ and $B$ be
 matrices in $\mathcal{G}=\GL(n, q)$. Then one of the following holds:
 
 (i)  $A^{\mathcal{G}}B^{\mathcal{G}}= (AB)^{\mathcal{G}}$ and at least one of $A$, $B$ is a scalar matrix.
 
   (ii) $A^{\mathcal{G}}B^{\mathcal{G}}$ is the union of at least
  $q-1$ distinct conjugacy classes, i.e. $\eta(A^{\mathcal{G}}B^{\mathcal{G}})\geq q-1$. 
 \end{teorema}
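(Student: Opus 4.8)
The plan is to detect many conjugacy classes in the product through a single class invariant, namely the trace. Since $\trace$ is constant on each conjugacy class of $\mathcal{G}$, matrices with different traces are never conjugate; thus it suffices to show that $A^{\mathcal{G}}B^{\mathcal{G}}$ contains elements realizing at least $q-1$ distinct traces, for then $\eta(A^{\mathcal{G}}B^{\mathcal{G}})\geq q-1$ and conclusion (ii) holds. We may assume that neither $A$ nor $B$ is scalar: otherwise the observation in the introduction gives $A^{\mathcal{G}}B^{\mathcal{G}}=(AB)^{\mathcal{G}}$, which is conclusion (i). Finally, since $B\in B^{\mathcal{G}}$ we have $\{\,gAg^{-1}B:g\in\mathcal{G}\,\}\subseteq A^{\mathcal{G}}B^{\mathcal{G}}$, so everything reduces to the following trace estimate: for non-scalar $A,B\in\mathcal{G}$, the set $\{\,\trace(gAg^{-1}B):g\in\mathcal{G}\,\}$ has at least $q-1$ elements.

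To generate trace values I would let $g$ run along inexpensive one-parameter families and read off a low-degree function of the parameter. For the transvection family $g_t=I+tN$ with $N=vw^{T}$ rank one and $w^{T}v=0$ (so $N^{2}=0$ and $g_t^{-1}=I-tN$), a direct expansion gives
\[
\trace(g_tAg_t^{-1}B)=\trace(AB)+t\,w^{T}(AB-BA)v-t^{2}(w^{T}Av)(w^{T}Bv),
\]
a polynomial of degree at most $2$ in $t\in\mathcal{F}$. For the scaling family $g_t=\mathrm{diag}(t,1,\dots,1)$ with $t\in\mathcal{F}^{\times}$, one gets a Laurent expression $Pt+Qt^{-1}+R$ with $P=(AB)_{11}-A_{11}B_{11}$ and $Q=(BA)_{11}-A_{11}B_{11}$, so that $P-Q=(AB-BA)_{11}$. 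In both families the decisive coefficient is controlled by the commutator $AB-BA$.

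The qualitative picture this suggests is exactly the split between the two theorems. A non-constant quadratic over $\mathcal{F}$ is only guaranteed to take at least $\lceil q/2\rceil$ distinct values, and $t\mapsto t^{2}$ shows this is sharp; since transvections lie in $\SL(n,q)$, this is the mechanism behind the $\lceil q/2\rceil$ bound of the $\SL$ statement. To reach $q-1$ in $\GL(n,q)$ I must spend the extra conjugating freedom (conjugators of non-unit determinant) to force the parameter-dependence to be \emph{linear} and non-constant: in the transvection family this means choosing $v,w$ with $(w^{T}Av)(w^{T}Bv)=0$ but $w^{T}(AB-BA)v\neq0$, giving all $q$ trace values; in the torus family it means arranging $Q=0\neq P$, giving the $q-1$ values $Pt+R$ as $t$ ranges over $\mathcal{F}^{\times}$. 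The non-scalar hypotheses on $A$ and $B$ are what make such a choice of adapted basis possible: non-scalarity of $A$ produces a vector $v$ with $v,Av$ independent, which is the room needed to place zeros in the right entries while keeping the commutator entry alive.

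The hard part will be carrying out this basis selection \emph{uniformly} over all non-scalar pairs. The families above degenerate precisely when $A$ and $B$ commute: then every coefficient $w^{T}(AB-BA)v$ and every $(AB-BA)_{11}$ vanishes, and the expressions become constant or purely quadratic. In that regime one must instead exploit conjugators from the torus normalizer, as already the commuting diagonal example $A=\mathrm{diag}(a,b)$, $B=\mathrm{diag}(c,d)$ shows, where merely swapping the two coordinates changes the trace of the product. I therefore expect the proof to be organized as a case analysis driven by the rational canonical forms of $A$ and $B$ (semisimple versus non-semisimple, repeated versus distinct eigenvalues, and the small fields and characteristic $2$), with the adapted-basis construction and the verification that one of the two families becomes linear and non-constant forming the technical core.
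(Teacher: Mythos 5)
Your starting point coincides with the paper's: both arguments rest on the observation (Lemma \ref{mainargument}) that it suffices to produce many distinct traces among the elements $gAg^{-1}B$, and your two expansion formulas are correct. But what you have written is a program, not a proof: the step you yourself flag as ``the hard part'' --- choosing, uniformly in $A$ and $B$, a family of conjugators along which the trace is linear and non-constant --- is precisely where all the content of Theorem A lies, and in the one regime where you do commit to a mechanism, that mechanism provably fails. Take $A$, $B$ commuting non-scalar diagonal matrices and $q$ odd. Your transvection family gives $\trace(g_tAg_t^{-1}B)=\trace(AB)-t^2(w^TAv)(w^TBv)$, whose image has at most $(q+1)/2$ elements; your torus family has $P=Q$, so it gives $P(t+t^{-1})+R$, again only about $q/2$ values; and your fallback --- conjugators from the torus normalizer --- cannot help at all, since the torus centralizes $A$ and only the Weyl group moves it, so that family produces at most $n!$ distinct traces, a bound independent of $q$. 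Thus for commuting diagonal pairs and odd $q>3$ the proposal yields only about $q/2$ classes, short of $q-1$. The paper's resolution of exactly this case is Lemma \ref{calculationsmain2}: conjugate by $E=\left(\begin{smallmatrix} I&0\\ 0&D\end{smallmatrix}\right)$ with $D=\left(\begin{smallmatrix} a&b\\ c&d\end{smallmatrix}\right)$, $ad-bc=1$, treating $x=ad$ as the parameter (so $bc=x-1$); the trace then equals $x(u_1-v_1)(u_2-v_2)+\mathrm{const}$, linear in $x$, giving all $q$ values. This family is neither a transvection family nor monomial, and it is the idea missing from your sketch.

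In the non-commuting regime your bound also hangs on an unproved existence claim: that one can pick $v,w$ with $w^Tv=0$, $w^T(AB-BA)v\neq0$ and $(w^TAv)(w^TBv)=0$, or a basis in which $Q=0\neq P$. This can be impossible: for $n=2$ with both characteristic polynomials irreducible over $\mathcal{F}$, the conditions $w^Tv=0=w^TAv$ force $w=0$ (since $v,Av$ span $\mathcal{F}^2$), and $Q=B_{12}A_{21}=0$ would force $e_1$ or $e_2$ to be an eigenvector of $A$ or $B$; so neither of your families can ever be made linear there, and each yields only $\lceil q/2\rceil$ values. The paper sidesteps this by organizing the case analysis around rational canonical forms rather than commutativity: when some companion block has degree at least $2$, it conjugates by the two-parameter family $E(x,y)$ of Lemma \ref{calculationsmain1}, whose trace is $\frac{1}{x}(ax^2-y^2+bxy+cy+e)+d+\trace(MN)$, and the dedicated field-theoretic Lemma \ref{xysoln} shows such an expression takes at least $q-1$ values --- the extra parameter $y$ is what rescues the cases where your one-parameter families stay quadratic. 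As it stands, your argument proves at best a bound of order $q/2$, i.e.\ the Theorem B-type estimate, not Theorem A.
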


Given any group $G$, 
denote by $\min(G)$ the smallest integer in 
the set $\{\eta(a^G b^G)\mid a,b\in  G\setminus {\bf Z}(G)  \}$.
 We want to emphasize that the previous result is not an optimal result, that is, 
 $\min(\GL(n,q))>q-1$ for certain values of $n$ and $q$.
By Remark \ref{optimal}, we have that $\min(\GL(2,2^m))=2^m-1$ for any integer $m>1$. Also, using 
 GAP \cite{GAP4}, we can check that $\min(\GL(2,q))=q-1$  for $q\in \{ 3,5,7,9,11,13\}$, but $\GL(3,3)=4 >2$. 
  Hence, we suspect that $\min(\GL(n,q))$ should be a function of $n$ as well as $q$. 
  
  We now  turn our attention to the special linear group $\SL(n,q)$, the group of $n\times n$ matrices with determinant $1$ over a finite field with $q$ elements.
 \begin{teoremab} 
 Let   $A$ and $B$ be
 matrices in $\mathcal{S}=\SL(n, q)$. Then one of the following holds:
 
 (i)  $A^{\mathcal{S}}B^{\mathcal{S}}= (AB)^{\mathcal{S}}$ and at least one of $A$, $B$ is a scalar matrix.
 
   (ii) $A^{\mathcal{S}}B^{\mathcal{S}}$ is the union of at least
  $\left\lceil\frac{q}{2}\right\rceil$ distinct conjugacy classes, i.e. $\eta(A^{\mathcal{S}}B^{\mathcal{S}})\geq \left\lceil\frac{q}{2}\right\rceil$. 
 \end{teoremab}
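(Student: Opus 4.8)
The plan is to reduce to the case where neither $A$ nor $B$ is scalar, and then exhibit, inside the single product set $A^{\mathcal{S}}B^{\mathcal{S}}$, a one-parameter family of elements whose traces already assume at least $\left\lceil\frac{q}{2}\right\rceil$ distinct values. Since the trace is a similarity invariant, matrices with distinct traces lie in distinct $\mathcal{S}$-conjugacy classes, and (ii) follows. If one of $A,B$, say $A=\lambda I$, is scalar (so $\lambda^{n}=1$), then $A^{\mathcal{S}}=\{\lambda I\}$ is central, whence $A^{\mathcal{S}}B^{\mathcal{S}}=(\lambda B)^{\mathcal{S}}=(AB)^{\mathcal{S}}$ and (i) holds; so I may assume both are non-scalar. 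Note this route does not need Theorem A.

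First I would observe that the set $A^{\mathcal{S}}B^{\mathcal{S}}$ depends only on the two classes $A^{\mathcal{S}}$ and $B^{\mathcal{S}}$, so I am free to replace $A$ and $B$ \emph{independently} by arbitrary $\SL(n,q)$-conjugates. The key normalization is the following lemma: a non-scalar matrix is $\SL(n,q)$-conjugate to a matrix whose $(2,1)$-entry is nonzero. Indeed, non-scalarity yields a vector $v$ with $v,Mv$ linearly independent; choosing a basis $e_{1}=v,e_{2},\dots$ in which $Mv$ has nonzero $e_{2}$-component places a nonzero entry in position $(2,1)$, and one rescales the change-of-basis matrix by a single diagonal factor to force determinant $1$ without disturbing that entry (this works for all $n\geq 2$). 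Applying this separately to $A$ and to $B$, I may assume $A_{21}\neq 0$ and $B_{21}\neq 0$.

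Next I would introduce the transvections $u_{\lambda}=I+\lambda E_{12}\in\SL(n,q)$ for $\lambda\in\mathcal{F}$ and consider the elements $A\,u_{\lambda}^{-1}Bu_{\lambda}\in A^{\mathcal{S}}B^{\mathcal{S}}$. A short computation using $u_{\lambda}^{-1}=I-\lambda E_{12}$ and the identity $E_{12}XE_{12}=X_{21}E_{12}$ gives
\[
\trace\bigl(A\,u_{\lambda}^{-1}Bu_{\lambda}\bigr)=-A_{21}B_{21}\,\lambda^{2}+\bigl((AB)_{21}-(BA)_{21}\bigr)\lambda+\trace(AB),
\]
a polynomial in $\lambda$ of degree $2$ whose leading coefficient $-A_{21}B_{21}$ is nonzero (the field has no zero divisors). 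The concluding step is the elementary fact that a quadratic over $\mathcal{F}$ with nonzero leading coefficient takes at least $\left\lceil\frac{q}{2}\right\rceil$ distinct values: for odd $q$ one completes the square and counts squares, obtaining $\frac{q+1}{2}$ values; for even $q$ the map is either the bijective Frobenius shift (when the linear term vanishes) or an additive, $\mathbb{F}_{2}$-linear map with a two-element kernel, of image size $\frac{q}{2}$. Hence the displayed traces realize at least $\left\lceil\frac{q}{2}\right\rceil$ values, forcing at least $\left\lceil\frac{q}{2}\right\rceil$ distinct $\mathcal{S}$-classes inside $A^{\mathcal{S}}B^{\mathcal{S}}$, which is (ii).

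The main obstacle is locating precisely where the constant $\left\lceil\frac{q}{2}\right\rceil$ originates and keeping the argument uniform in the parity of $q$: the quadratic value-count is exactly what produces the ceiling, and it is essential that the leading coefficient be guaranteed nonzero, which is what the simultaneous $(2,1)$-entry normalization of $A$ and $B$ secures. I would take care to verify that the determinant-adjustment in the normalization lemma preserves nonvanishing of the relevant entry for every $n\geq 2$, and that the trace computation and the value-count remain valid in characteristic $2$.
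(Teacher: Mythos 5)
Your proof is correct, but it takes a genuinely different route from the paper. The paper works with rational canonical forms: it writes $A$ and $B$ as direct sums of companion matrices (Hypothesis \ref{hypothesis}) and then splits into cases — if some companion block has size at least $2$, it conjugates by the determinant-one matrices $E(1,y)$ supported in the last $2\times 2$ corner and invokes Lemma \ref{calculationsmain1}; if both matrices are diagonal, it uses a different conjugator in Lemma \ref{calculationsmain2}, which there even yields $q$ distinct traces. Your argument avoids canonical forms and case analysis entirely: you normalize each non-scalar matrix (independently, which is legitimate since $A^{\mathcal{S}}B^{\mathcal{S}}$ depends only on the two classes) to have nonzero $(2,1)$-entry — your cyclic-vector argument and the diagonal determinant correction are both sound, since conjugation by $\operatorname{diag}(\delta^{-1},1,\dots,1)$ multiplies the $(2,1)$-entry by a nonzero scalar — and then conjugate by the transvections $u_\lambda=I+\lambda E_{12}$, which lie in $\mathcal{S}$ automatically. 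Your trace formula $\trace(A\,u_{\lambda}^{-1}Bu_{\lambda})=-A_{21}B_{21}\lambda^{2}+((AB)_{21}-(BA)_{21})\lambda+\trace(AB)$ checks out, and your value count for a quadratic with nonzero leading coefficient is exactly the content of the paper's Lemma \ref{fieldsize}(ii), handled correctly in both parities. What the comparison buys: your proof is shorter, uniform in $q$ and in the structure of $A$ and $B$, and independent of the machinery for Theorem A; the paper's heavier apparatus, on the other hand, is shared infrastructure that also delivers Theorem A's stronger bound $\eta\geq q-1$ for $\GL(n,q)$ (using conjugators of non-unit determinant, which your transvection method cannot exploit, since a one-variable quadratic can never beat $\lceil q/2\rceil$ over even-characteristic fields with a nonzero linear term). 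For Theorem B alone, your route is arguably the cleaner one.
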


For a detailed study of $\SL(2,q)$, see \cite{sl2q}.
 
{\bf Acknowledgment.}
We would like to thank Professor Everett C. Dade for his suggestion to study products of conjugacy
classes in the linear groups.
\end{section}

\begin{section}{Proofs}

\begin{lem}\label{interchange}
Let $G$ be a finite group, $a^G$ and $b^G$ be conjugacy classes of $G$. 
Then $a^G b^G= b^G a^G$.
\end{lem}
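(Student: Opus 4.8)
The plan is to prove the two inclusions $a^G b^G \subseteq b^G a^G$ and $b^G a^G \subseteq a^G b^G$ separately; by symmetry it will suffice to establish one of them. The crucial observation is the elementary fact that, for any $X,Y\in G$, the products $XY$ and $YX$ are conjugate in $G$. Indeed, in the conjugation notation of the paper one computes $(YX)^{X^{-1}} = X(YX)X^{-1} = XY$, so $XY$ is the conjugate of $YX$ by $X^{-1}$.

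First I would record that both $a^G b^G$ and $b^G a^G$ are $G$-invariant subsets of $G$. This is exactly the observation already made in the introduction, that a product of conjugacy classes is a $G$-invariant set, and it is this invariance that allows the conjugacy of $XY$ and $YX$ to do the real work. Now take an arbitrary $z\in a^G b^G$ and write $z=XY$ with $X\in a^G$ and $Y\in b^G$. Since $Y\in b^G$ and $X\in a^G$, the element $YX$ lies in $b^G a^G$. By the identity above $XY$ is conjugate to $YX$, and because $b^G a^G$ is $G$-invariant it is closed under conjugation, so $XY\in b^G a^G$. This gives $a^G b^G \subseteq b^G a^G$, and interchanging the roles of $a$ and $b$ yields the reverse inclusion, hence equality.

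I do not expect any genuine obstacle here, as the statement is elementary. The only point that requires a moment's care is resisting the temptation to match factors directly, which would force one to rewrite $XY$ literally as a product $Y'X'$ with $Y'\in b^G$ and $X'\in a^G$; instead the argument is routed through the $G$-invariance of the target set, for which the mere conjugacy of $XY$ and $YX$ already suffices.
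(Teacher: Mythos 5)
Your proof is correct, but there is nothing in the paper to compare it against line by line: the paper gives no argument for this lemma at all, deferring instead to Lemma 3 of \cite{symmetric}. Your self-contained argument is therefore a genuine addition, and it is the standard one: the identity $(YX)^{X^{-1}}=X(YX)X^{-1}=XY$ shows $XY$ and $YX$ are conjugate, and since $b^G a^G$ is a $G$-invariant set containing $YX$, it must contain $XY$; symmetry in $a$ and $b$ gives equality. One small remark on your closing paragraph: the ``direct matching'' you caution against actually works with no difficulty and yields an even shorter proof. For $X\in a^G$ and $Y\in b^G$ one can simply write $XY=(XYX^{-1})X$, where $XYX^{-1}\in b^G$ (it is a conjugate of $Y$) and $X\in a^G$, so $XY\in b^G a^G$ immediately. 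This is the same computation as yours, merely read as a factorization rather than routed through invariance of the target set, so your warning is unnecessary---though your version is equally valid and has the mild virtue of making the role of $G$-invariance explicit.
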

\begin{proof}
See Lemma 3 of \cite{symmetric}.
\end{proof}

\begin{lem}\label{fieldsize} 
Let $\mathcal{F}$ be a finite field with $q$ elements, 
and $a, b,c\in \mathcal{F}$ with $a\neq 0$. 

(i) If $q$ is even, then the set
$\{ai^2+ c\mid i\in \mathcal{F}\}$ has $q$ elements.

(ii)  The set
$\{ai^2+bi+c\mid i\in \mathcal{F}\}$ has at least $\left\lceil\frac{q}{2}\right\rceil$ elements.
 
\end{lem}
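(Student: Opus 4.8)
The plan is to analyze the image of the quadratic map $i \mapsto ai^2 + bi + c$ as $i$ ranges over the field $\mathcal{F}$, splitting into the characteristic-two and odd-characteristic cases. For part (i), where $q$ is even and $b$ is effectively absent, I would first observe that in characteristic two the Frobenius map $x \mapsto x^2$ is an additive homomorphism; since $\mathcal{F}$ is a finite field, this map is injective (its kernel is trivial because $x^2 = 0$ forces $x = 0$), hence bijective. Therefore $i \mapsto i^2$ is a bijection of $\mathcal{F}$, and composing with the affine bijection $t \mapsto at + c$ (using $a \neq 0$) shows that $\{ai^2 + c \mid i \in \mathcal{F}\}$ is the image of a bijection and so has exactly $q$ elements.

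For part (ii), I would treat the odd-characteristic case separately, since part (i) already disposes of even $q$ (the set there has $q \geq \lceil q/2 \rceil$ elements, and I would remark that even with a linear term the even case still gives all $q$ values by completing to a bijection). So assume $q$ is odd. The key structural fact is that the squaring map $i \mapsto i^2$ on $\mathcal{F}$ is exactly two-to-one on nonzero elements (since $i^2 = j^2$ iff $j = \pm i$ and $i \neq -i$ when $i \neq 0$), so the image $\{i^2 \mid i \in \mathcal{F}\}$ consists of $0$ together with the $(q-1)/2$ nonzero squares, giving $\frac{q-1}{2} + 1 = \frac{q+1}{2} = \lceil q/2 \rceil$ distinct values.

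To transfer this count to the general quadratic, I would complete the square. Since $q$ is odd, $2a$ is invertible, and writing
\begin{equation*}
ai^2 + bi + c = a\left(i + \frac{b}{2a}\right)^2 + \left(c - \frac{b^2}{4a}\right)
\end{equation*}
shows that the map factors as $i \mapsto i + \frac{b}{2a}$ (an affine bijection), followed by squaring, followed by the affine bijection $u \mapsto au + (c - \frac{b^2}{4a})$ (a bijection because $a \neq 0$). The two outer bijections preserve cardinality, so the size of $\{ai^2 + bi + c \mid i \in \mathcal{F}\}$ equals the size of the image of the squaring map, namely $\lceil q/2 \rceil$. Combining the odd case (exactly $\lceil q/2 \rceil$) with the even case (which yields $q \geq \lceil q/2 \rceil$) establishes the stated lower bound.

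I expect the only delicate point to be the bookkeeping around completing the square in odd characteristic — ensuring that $2$ and hence $2a$ and $4a$ are genuinely invertible in $\mathcal{F}$, which holds precisely because $q$ is odd. The even case must be handled by the direct Frobenius argument rather than by completing the square, since $2 = 0$ there; isolating these two cases from the start is what keeps the argument clean.
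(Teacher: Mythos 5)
Your part (i) matches the paper's argument (Frobenius is bijective in characteristic two), and your odd-characteristic treatment of part (ii) by completing the square is correct and in fact sharper than needed: it shows the image has exactly $\left\lceil\frac{q}{2}\right\rceil$ elements. But there is a genuine gap in your even-characteristic case of part (ii). Part (i) only covers $b=0$; for $b\neq 0$ you assert that ``even with a linear term the even case still gives all $q$ values by completing to a bijection,'' and this is false. In characteristic two the map $i\mapsto ai^2+bi$ is $\mathbb{F}_2$-linear (Frobenius plus a linear map) with kernel $\{0,\,b/a\}$, so when $b\neq 0$ it is exactly two-to-one and its image has exactly $q/2$ elements, not $q$. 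Concretely, in $\mathcal{F}=\mathbb{F}_4$ the polynomial $i^2+i$ takes only the two values $0$ and $1$. The conclusion of the lemma still holds, since $q/2=\left\lceil\frac{q}{2}\right\rceil$ when $q$ is even, but not for the reason you give: you need the two-to-one count, and you cannot complete the square at all since that requires dividing by $2=0$.

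The paper avoids this entirely with a single uniform argument for (ii): $ai^2+bi+c=aj^2+bj+c$ with $i\neq j$ forces $i+j=-b/a$, so every value in the image has at most two preimages, giving at least $q/2$, hence at least $\left\lceil\frac{q}{2}\right\rceil$, distinct values in any characteristic. Your proof is repaired by the same observation: replace your bijection claim for even $q$, $b\neq 0$ with the statement that the additive map $i\mapsto ai^2+bi$ has kernel of size $2$, so the image of $ai^2+bi+c$ has exactly $q/2$ elements, which meets the bound. The trade-off between the two approaches is that yours yields exact image sizes in every case (at the cost of case analysis and the characteristic-two subtlety you tripped on), while the paper's fiber-counting argument gives only the lower bound but needs no case distinction whatsoever.
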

\begin{proof}
(i) If the field $\mathcal{F}$ has even characteristic, the map $x\mapsto x^2$ is an automorphism of the field.
Observe then
that  $$|\{ai^2+ c\mid i\in \mathcal{F}\}|= |\{ai^2\mid i\in \mathcal{F}\}|=|\{i^2\mid i\in \mathcal{F}\}|=q.$$

(ii) Observe that $ai^2+bi+c=aj^2+bj+c$ for $i\neq j$ if and only if 
$a(i^2-j^2)=a(i+j)(i-j)=-b(i-j)$, and thus if and only if $i+j=\frac{-b}{a}$, i.e
$j=-i+\frac{-b}{a}$. 
Thus given any $i\in \mathcal{F}$, we can find at most one other element $j\in\mathcal{F}$
such that $j\neq i$ and $ai^2+bi+c=aj^2+bj+c$, namely $j=i+\frac{-b}{a}$.
We conclude that the set  $\{ai^2+bi+c\mid i\in \mathcal{F}\}$
 has at least $\left\lceil\frac{q}{2}\right\rceil$ elements and the proof is complete. 
\end{proof}  
\begin{lem}\label{xysoln} 
Let $\mathcal{F}$ be a finite field with $q$ elements.
 Fix $a,b,c,d$ and  $e$ in $\mathcal{F}$.
Then for at least $q-1$ distinct values of $f$ in $\mathcal{F}$, the equation 
\begin{equation}\label{equationbasic}
ax^2-y^2+bxy+cy+(d-f)x+e=0
\end{equation}
\noindent has a solution $(x,y) \in \mathcal{F}\times\mathcal{F}$ with $x\neq 0$. 
In particular, the set
$\{\frac{1}{x}(ax^2-y^2+bxy+cy+e)+d\mid (x,y)\in \mathcal{F}\times \mathcal{F}, x\neq 0\}$ has at least $q-1$ elements. 
\end{lem}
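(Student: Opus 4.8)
The plan is to recast membership in the set
$S:=\{\tfrac1x(ax^2-y^2+bxy+cy+e)+d\mid x\ne 0,\ y\in\mathcal F\}$
as a solvability condition and then to prove that $S$ omits \emph{at most one} element of $\mathcal F$; since this forces $|S|\ge q-1$, and since the number of $f$ admitting a solution of \eqref{equationbasic} with $x\ne 0$ is exactly $|S|$, that settles both assertions at once. Fixing $f\in\mathcal F$ and reading the left side of \eqref{equationbasic} as a quadratic in $y$, one sees that $f\in S$ if and only if, for at least one $x\ne 0$, the quadratic $-y^2+(bx+c)y+(ax^2+(d-f)x+e)$ has a root in $\mathcal F$. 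So everything reduces to controlling, as $f$ varies, the set of $x\ne 0$ for which this one-variable quadratic is solvable.

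First I would treat odd $q$. There solvability of $-y^2+\beta y+\gamma$ is equivalent to the discriminant $\beta^2+4\gamma$ being a square, so $f\in S$ exactly when $D_f(x):=(b^2+4a)x^2+(2bc+4d-4f)x+(c^2+4e)$ is a square for some $x\ne 0$. When $b^2+4a\ne 0$, completing the square shows that, as $x$ runs over $\mathcal F$, the values $D_f(x)$ form a set of the shape $(b^2+4a)\cdot\{\text{squares}\}+\delta_f$ of size $\tfrac{q+1}{2}$; since the squares also number $\tfrac{q+1}{2}$ and $\tfrac{q+1}{2}+\tfrac{q+1}{2}>q$, the two sets must meet, so $D_f(x)$ is a square for some $x$. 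The restriction $x\ne 0$ is the delicate point: the value at $x=0$ realizes one particular square, but that same square is attained at a second, nonzero $x$ precisely when $f\ne \tfrac{bc}{2}+d$, so for every $f$ other than this single value the intersection is witnessed by some $x\ne 0$. The degenerate subcase $b^2+4a=0$ makes $D_f$ linear in $x$, and for $f\ne \tfrac{bc}{2}+d$ the values $D_f(x)$ with $x\ne 0$ exhaust $\mathcal F\setminus\{c^2+4e\}$ and hence include a square. Either way at most one $f$ is excluded.

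For even $q$ the discriminant test is replaced by an Artin--Schreier (trace) condition, and here I would lean on Lemma~\ref{fieldsize}(i): since $t\mapsto t^2$ is a bijection, $y^2+(bx+c)y+\gamma$ is solvable automatically when $bx+c=0$, and otherwise solvable exactly when $\trace\bigl(\gamma(bx+c)^{-2}\bigr)=0$. If $bx+c$ can be made $0$ for some $x\ne 0$ (which happens when $b$ and $c$ are both nonzero, via $x=c/b$) then $f\in S$ for every $f$, and likewise $b=c=0$ gives $y^2=\gamma$, always solvable. The remaining configurations are $b=0\ne c$ and $c=0\ne b$; in each, after the substitution $z=1/x$ in the second case and using the identity $\trace(w^2)=\trace(w)$ to linearize the squared term, the condition $\trace(\cdots)=0$ becomes the vanishing of an $\mathbb F_2$-affine form $x\mapsto\trace(\lambda_f x)+\kappa$ (resp.\ in $z$). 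Such a form is surjective onto $\mathbb F_2$, hence zero at some nonzero argument, unless $\lambda_f=0$; as $\lambda_f$ is affine in $f$, this again excludes at most one $f$.

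The main obstacle is not the generic count but the bookkeeping at the boundary: making the pigeonhole argument survive the constraint $x\ne 0$ (equivalently, checking that the lone square possibly lost at $x=0$ reappears at another nonzero $x$), and correctly isolating the single exceptional $f$ in each degenerate configuration ($b^2+4a=0$ in odd characteristic, and $b=0$ or $c=0$ in even characteristic), together with the smallest fields $q\in\{2,3\}$, which I would simply verify by hand. Putting characteristic two on an equal footing with odd characteristic, via the trace form in place of the discriminant, is the step I expect to demand the most care.
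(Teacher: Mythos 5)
Your argument is correct, but it takes a genuinely different route from the paper's. The paper applies the quadratic formula to \eqref{equationbasic} viewed as a quadratic in $x$, with $y$ as the free parameter; since the unknown it solves for is exactly the one required to be nonzero, it must repeatedly patch the possibility $x=0$ (the sub-cases on whether $c^2+4e$ is a square, and the second-root trick $(\frac{f-d-by}{a},y)$), and it treats $a=0$ and even characteristic by separate explicit constructions. You solve for $y$ instead, with $x$ as the parameter: the leading coefficient in $y$ is $-1$, so the degenerate case $a=0$ disappears, and the constraint $x\neq 0$ now restricts the parameter rather than the produced solution. In odd characteristic both proofs then run the same pigeonhole, $(q+1)/2$ discriminant values against $(q+1)/2$ squares, but your reflection observation --- the value $D_f(0)$ is re-attained at $x=-(2bc+4(d-f))/(b^2+4a)\neq 0$ whenever $f\neq \frac{bc}{2}+d$ --- settles the $x\neq 0$ issue in one stroke and isolates a single exceptional $f$; note your phrasing ``the value at $x=0$ realizes one particular square'' is loose, since $D_f(0)$ need not be a square, but the underlying logic (the range of $D_f$ over $x\neq 0$ equals its full range for such $f$) is sound. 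In even characteristic you replace the paper's elementary substitutions, which use only that squaring is a bijection, by the Artin--Schreier trace criterion together with the linearization $\trace(w^2)=\trace(w)$; that is a heavier tool, and it is also where your one residual debt lies: when the constant $\kappa$ vanishes, finding a nonzero zero of $x\mapsto \trace(\lambda_f x)+\kappa$ needs the kernel to exceed $\{0\}$, i.e. $q\geq 4$, which is precisely the small case $q=2$ you defer to hand-checking (it is immediate: $f=a+d+e$ works with $(x,y)=(1,0)$). On balance, your route is shorter and more systematic, pinpointing the at-most-one bad $f$ in every configuration, while the paper's is more elementary in characteristic two and uniform over all $q$.
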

\begin{proof}
Suppose $a = 0$. If $e\neq 0$, then $(\frac{-e}{d-f},0)$ is a solution for the equation as long as $f\neq d$.
If $e=0$, choose $y \neq 0$.  Then a solution for \eqref{equationbasic} is 
$(\frac{y^2}{by+c+d-f},y)$, for all $f \neq by+c+d$. 

We may assume then that $a\neq0$. 

{\bf Case 1.} 
Assume that $q$ is even.

Suppose $b=0$.  If $f=d$, choose $y$ such that $-y^2+cy+e\neq 0$. Then the set $\{ax^2-y^2+cy+e\mid x\in \mathcal{F}\}$ has $q$ elements by Lemma \ref{fieldsize} (i).  Hence, \eqref{equationbasic} has a solution with $x\neq 0$.

Suppose $f\neq d$.  For each $y$, choose $x=\frac{-cy}{d-f}$, so that $cy+(d-f)x=0$. By Lemma \ref{fieldsize} (i),  the set $\{ax^2-y^2+e 
\mid y\in \mathcal{F}, x=\frac{-cy}{d-f}\} = \{(\frac{-ac^2}{(d-f)^2} - 1)y^2 + e\mid y\in \mathcal{F}\}$ has $q$ elements, as long as $-ac^2 \neq (d-f)^2$. Then 
for some $y$, $(\frac{-cy}{d-f},y)$ is a solution for \eqref{equationbasic}, and $x\neq 0$ as long as $c\neq 0$ and $y \neq 0$. If $c=0$, then 
the set $\{ax^2-y^2+(d-f)x+e\mid y\in \mathcal{F}, x=1\}$ has $q$ elements by Lemma \ref{fieldsize} (i),
and so for some $y$, $(1,y)$ is a solution for \eqref{equationbasic}.
If $y=0$, then $e=0$, and so $(-\frac{d-f}{a},0)$ is a solution for \eqref{equationbasic} with $x\neq0$.

We may assume then that $b\neq 0$.  If $c\neq 0$, then
consider $x= \frac{-c}{b}$ and so $bxy+cy=0$. As before, the set
$\{ax^2-y^2+(d-f)x+e\mid y\in \mathcal{F}, x=\frac{-c}{b}\}$ has $q$ elements and thus
for some $y$, we have that $(\frac{-c}{b}, y)$ is a solution for \eqref{equationbasic}.
 We may assume then that $c=0$. Let $y=-\frac{d-f}{b}$ and thus $bxy+(d-f)x=0$. Then the set
$\{ax^2- y^2+ f\mid x\in \mathcal{F}\}$ has $q$ elements. Thus, for some 
$x$, we have that $(x, y)$ is a solution for \eqref{equationbasic}. 
If $x=0$ then $-y^2+e=0$ and since the field is of characteristic $2$, then there is a unique  $f\in \mathcal{F}$
such that $-(-\frac{d-f}{b})^2+e=0$. We conclude that in each case, for at least $q-1$ values of $f$, there
exists a solution for \eqref{equationbasic} with $x\neq 0$ when $q$ is even.

{\bf Case 2.} Assume that $q$ is odd.

Solving for $x$ with the quadratic formula, we get the discriminant
$$\Delta=(by+d-f)^2-4a(e+cy-y^2)=y^2(b^2+4a)+y(2b(d-f)-4ac)+((d-f)^2-4ae)
 $$
which takes on at least $(q+1)/2$ values as long as
$b^2+4a$ and $2b(d-f)-4ac$ are not both zero.

Suppose $b^2+4a = 0$ and assume that $c^2 + 4e$ is a square.

If $2b(d-f)-4ac = 0$, $(\frac{-c+2y+\sqrt{c^2+4e}}{b},y)$ is a
solution for \eqref{equationbasic}.  Observe that if for some $y$, $(0,y)$ is a solution, then 
$-y^2+cy+e=0$.  Thus $y = \frac{c\pm\sqrt{c^2+4e}}{2}$ and $(\frac{f-d-by}{a}, y)$ is another solution for \eqref{equationbasic}. Thus, for any value of $f$ such that 
$f\neq d+by$, there exists some $x\neq 0$ such that \eqref{equationbasic} holds.  

If $2b(d-f)-4ac \neq 0$, the discriminant $\Delta$ must be a square for some $y$, since $\frac{q+1}{2}$ elements of $\mathcal{F}$ are squares and thus $(\frac{-(by+d-f)+ \sqrt{\Delta}}{2a}, y)$ is a
solution for \eqref{equationbasic}.  If $(0,y)$ is a solution, then as above, $y = \frac{c\pm\sqrt{c^2+4e}}{2}$ and 
$(\frac{f-d-by}{a}, y)$ is another solution.  Thus, for $f\neq d+by$, there exists some $x\neq 0$ such that \eqref{equationbasic} holds.

Suppose $b^2+4a = 0$ and assume that $c^2 + 4e$ is not a square.  For all $f$ such that $2b(d-f)-4ac  \neq 0$,
the discriminant $\Delta$ must be a square for some $y$, and thus $(\frac{-(by+d-f)+ \sqrt{\Delta}}{2a}, y)$ is a solution for \eqref{equationbasic}.
If $(0,y)$ is a solution, then $-y^2+cy+e=0$ and so $c^2 + 4e$ is a square.  But $c^2 + 4e$ is not a square by assumption.
Hence, $(x, y)$ is a solution for \eqref{equationbasic} with $x = \frac{-(by+d-f)+ \sqrt{\Delta}}{2a} \neq 0$.

Now, suppose that $b^2+4a \neq 0$.  $\Delta$ must be a square for some $y$, and so $(\frac{-(by+d-f)+ 
\sqrt{\Delta}}{2a}, y)$ is a solution for \eqref{equationbasic}.  If $(0,y)$ is a solution, then $y = \frac{c\pm\sqrt{c^2+4e}}{2}$ and $(\frac{f-d-by}{a}, y)$ is another solution.  Thus, for $f\neq d+by$, there exists some $x\neq 0$ such that \eqref{equationbasic} holds.

We conclude that in each case,
for at least $q-1$ values of $f$, there
exists a solution $(x,y)$ for \eqref{equationbasic} with $x\neq0$. 

Observe that $\frac{1}{x}(ax^2-y^2+bxy+cy+e)+d=f$
 if and only if $ax^2-y^2+bxy+ cy+ (d-f)x+e=0$ for some $(x,y) \in \mathcal{F}\times\mathcal{F}$ with $x\neq 0$. The last statement
 then follows. 
\end{proof}  

{\bf Notation.} We will denote matrices with uppercase letters and elements in $\mathcal{F}$
with lowercase letters. 

\begin{rem}\label{conjugacytypes}
Let $A$ be an $n\times n$ matrix over $\mathcal{F}$. It is well known that 
$A$ is similar to a matrix $M$ such that $M$ is the direct sum of the companion matrices of a family of 
polynomials ${p_1}$, $\ldots$, ${p_t}$ in $\mathcal{F}[x]$.

Recall that the companion matrix of a polynomial $x^r+ a_{r-1} x^{r-1} +\cdots +
a_0 \in \mathcal{F}[x]$ is

\begin{equation}\label{rationalcanonical}
R=\left(\begin{array}{cccccccc} 0& 0 &0 & 0 & \cdots &0& 0 &-a_0\\
1& 0 &0 & 0 &\cdots &0 &0 &-a_1\\
0& 1 &0 & 0 &\cdots &0 &0 &-a_2\\
0& 0 &1 & 0 &\cdots &0 & 0 &-a_3\\
\vdots&    & & & & && \vdots\\
0& 0 &0 & 0 &\cdots &1& 0 &-a_{r-2}\\
0& 0 &0 & 0 &\cdots &0& 1 &-a_{r-1}
         \end{array} \right)
\end{equation}      
\end{rem}
\begin{hyp}\label{hypothesis}
Fix $n\geq 2$.
Let $M$ be a matrix similar to $A$ such that $M$ is the direct sum of the companion matrices of a family of 
polynomials ${p_1}$, $\ldots$, ${p_t}$. Also, let 
$N$ be a matrix similar to $B$ such that $N$ is the direct sum of the companion matrices of a family of 
polynomials ${q_1}$, $\ldots$, ${q_w}$. 
   Assume that $R$ is the last direct summand in $M$, that is,
 \begin{equation}
 M=\left(
\begin{array}{cc} M_{11} & 0\\
         0 & R\end{array} \right)
         \end{equation}
 \noindent  where $M_{11}$ is an $(n-r)\times (n-r)$ matrix. 
        
Let $S$ be the companion matrix of the polynomial $x^s+ b_{s-1} x^{s-1} +\cdots +
b_0$.  Assume that $S$ is the last direct summand in $N$, that is
 \begin{equation}\label{N}
         N=\left(
\begin{array}{cc} N_{11} & 0\\
         0 & S\end{array} \right),
         \end{equation} 
 \noindent where $N_{11}$ is an $(n-s)\times (n-s)$ matrix. 
 \end{hyp}

\begin{rem}\label{tracesargument}
Two matrices in the same conjugacy class have the same trace. Thus, if the matrices
do not have the same trace, then they belong to distinct conjugacy classes. 
\end{rem}

The following will be the main argument in the proofs of Theorem A and Theorem B and it follows from the previous 
remark.
\begin{lem}\label{mainargument}
  Let $\mathcal{H}$ be a subgroup of $\GL(n,q)$ and $A$, $B$ in $\mathcal{H}$.
  Suppose that the set $\{\trace(XY)\mid X\in A^{\mathcal{H}}B^{\mathcal{H}}\}$ has at least
  $r$ elements. 
 Then  $A^{\mathcal{H}}B^{\mathcal{H}}$ is the union of at least $r$ distinct conjugacy classes of $\mathcal{H}$, i.e.
 $\eta( A^{\mathcal{H}}B^{\mathcal{H}}  )\geq r$. 
\end{lem}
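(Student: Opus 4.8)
The plan is to reduce the statement to the elementary observation in Remark \ref{tracesargument}, namely that matrices with distinct traces lie in distinct conjugacy classes. Concretely, if the set of traces $\{\trace(XY) \mid X \in A^{\mathcal{H}}B^{\mathcal{H}}\}$ has at least $r$ elements, then the products $XY$ realizing these distinct trace values cannot all lie in the same conjugacy class; in fact each distinct trace value forces at least one new conjugacy class. First I would observe that $A^{\mathcal{H}}B^{\mathcal{H}}$ is itself an $\mathcal{H}$-invariant set (as noted in the Introduction for the ambient group, and the same verification works verbatim for the subgroup $\mathcal{H}$), so it decomposes as a disjoint union of $\eta(A^{\mathcal{H}}B^{\mathcal{H}})$ distinct conjugacy classes of $\mathcal{H}$.

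Next I would set up the counting argument explicitly. Write $A^{\mathcal{H}}B^{\mathcal{H}} = C_1 \cup C_2 \cup \cdots \cup C_k$, where $k = \eta(A^{\mathcal{H}}B^{\mathcal{H}})$ and the $C_j$ are the distinct conjugacy classes of $\mathcal{H}$ contained in the product. The trace is a class function: every element of a fixed conjugacy class $C_j$ has the same trace, since conjugation $X \mapsto P^{-1}XP$ preserves the trace. Hence the trace map $X \mapsto \trace(X)$ restricted to $A^{\mathcal{H}}B^{\mathcal{H}}$ takes at most one value on each $C_j$, so it takes at most $k$ values in total on the whole product. Rephrasing in the notation of the statement, the set $\{\trace(XY)\mid X\in A^{\mathcal{H}}B^{\mathcal{H}}\}$ — which is just the image of the trace on the product — has at most $\eta(A^{\mathcal{H}}B^{\mathcal{H}})$ elements.

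Finally I would combine this with the hypothesis: if that trace set has at least $r$ elements, then $r \leq \eta(A^{\mathcal{H}}B^{\mathcal{H}})$, which is exactly the desired conclusion $\eta(A^{\mathcal{H}}B^{\mathcal{H}}) \geq r$. Here I should be slightly careful about the notation in the statement, where $X$ ranges over $A^{\mathcal{H}}B^{\mathcal{H}}$ and the quantity of interest is $\trace(XY)$; since elements of the product are already single matrices of the form $XY$, the intended reading is that we count distinct trace values among all products, i.e. the image of $\trace$ on $A^{\mathcal{H}}B^{\mathcal{H}}$, and the argument goes through as above.

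There is essentially no obstacle here: the lemma is a soft, purely formal consequence of the fact that trace is a class function, and the real content of the paper lies in the companion results (such as Lemma \ref{xysoln} and Lemma \ref{fieldsize}) that produce the required number of distinct trace values for the specific products of classes in $\GL(n,q)$ and $\SL(n,q)$. The only point requiring any attention is confirming that the product of two conjugacy classes is genuinely $\mathcal{H}$-invariant and therefore a well-defined union of whole conjugacy classes, so that $\eta$ applies; this is immediate from $(P^{-1}XP)(P^{-1}YP) = P^{-1}(XY)P$ together with Lemma \ref{interchange} if needed, but it is standard and was already recorded for the full group in the Introduction.
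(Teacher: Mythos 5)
Your proof is correct and takes exactly the approach the paper intends: the paper gives no written proof at all, simply asserting that the lemma ``follows from the previous remark'' (Remark \ref{tracesargument}), and your argument is precisely the spelled-out version of that --- trace is a class function, so it takes at most one value on each of the $\eta(A^{\mathcal{H}}B^{\mathcal{H}})$ classes, forcing $\eta(A^{\mathcal{H}}B^{\mathcal{H}})\geq r$. Your reading of the (slightly garbled) set notation as the image of the trace on the product $A^{\mathcal{H}}B^{\mathcal{H}}$ is also the intended one.
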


\begin{lem}\label{generalcase}
Let $R$ be an $r\times r$ matrix as in $\eqref{rationalcanonical}$ with $r\geq 2$, $I$ be the $(r-2)\times (r-2)$ identity matrix,
$D=\left(
\begin{array}{cc} a & b\\
         c & d\end{array} \right)$ be a $2\times 2$ matrix with determinant $w=ad-bc\neq 0$. 
 and $0$ be a matrix of  zeros with the appropriate size. 
 Set $E=\left(
\begin{array}{cc} I & 0\\
         0 & D\end{array} \right)$.
Then for $r=2$, 
$$R^E = \frac{1}{w}\left(\begin{array}{cc} -a_0cd -ab+a_1bc &-a_0d^2-b^2+a_1bd\\
a_0c^2 +a^2-aa_1c &a_0cd+ab-aa_1d\end{array} \right),$$
and for $r>2$, $R^{E}$ is the $r\times r$ matrix 
$$\left(\begin{array}{ccccccccc} 0& 0 &0 & 0 &\cdots&0 &0 & -a_0 c &-a_0d\\
1& 0 &0 & 0&\cdots&0&0 & -a_1 c &-a_1d\\
0& 1 &0 & 0 &\cdots&0 &0 & -a_2c  &-a_2d\\
\vdots&    &&&&&&\vdots & \vdots\\
0& 0 &0 & 0 &\cdots&1&0 & -a_{r-3}c & -a_{r-3}d \\
0& 0 &0 & 0 &\cdots&0&\frac{d}{w}  & \frac{-a_{r-2}cd -ab+a_{r-1}bc}{w} &\frac{-a_{r-2}d^2-b^2+a_{r-1}bd}{w}\\
0& 0 &0 & 0 &\cdots&0&\frac{-c}{w}  & \frac{a_{r-2}c^2 +a^2-aa_{r-1}c}{w} &\frac{a_{r-2}cd+ab-aa_{r-1}d}{w}
         \end{array} \right).$$

\end{lem}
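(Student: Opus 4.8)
The plan is to verify both displayed formulas by computing $R^E=E^{-1}RE$ directly, exploiting the block-diagonal form of $E$. Since $E=\left(\begin{smallmatrix} I & 0\\ 0 & D\end{smallmatrix}\right)$ and $D$ is invertible with determinant $w$, we have $E^{-1}=\left(\begin{smallmatrix} I & 0\\ 0 & D^{-1}\end{smallmatrix}\right)$ with $D^{-1}=\frac{1}{w}\left(\begin{smallmatrix} d & -b\\ -c & a\end{smallmatrix}\right)$. Thus conjugation by $E$ fixes the first $r-2$ coordinates and acts by conjugation by $D$ on the last two.

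For $r=2$ the matrix $I$ is empty, so $E=D$ and $R=\left(\begin{smallmatrix} 0 & -a_0\\ 1 & -a_1\end{smallmatrix}\right)$. The first step is to form $RD$ and then left-multiply by $D^{-1}$; carrying out these two $2\times 2$ products and collecting the factor $\frac{1}{w}$ reproduces the claimed matrix. This is a short, purely mechanical calculation.

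For $r>2$ I would partition $R$ conformally with $E$ as $R=\left(\begin{smallmatrix} R_{11} & R_{12}\\ R_{21} & R_{22}\end{smallmatrix}\right)$, where $R_{11}$ is $(r-2)\times(r-2)$ and $R_{22}$ is $2\times 2$. Reading the entries of the companion matrix \eqref{rationalcanonical} off block by block shows that $R_{11}$ is the subdiagonal shift, $R_{12}$ has zero first column and second column $(-a_0,\dots,-a_{r-3})$, that $R_{21}$ is zero except for a $1$ in the last column of its first row, and that $R_{22}=\left(\begin{smallmatrix} 0 & -a_{r-2}\\ 1 & -a_{r-1}\end{smallmatrix}\right)$. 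Block multiplication then gives
$$E^{-1}RE=\begin{pmatrix} R_{11} & R_{12}D\\ D^{-1}R_{21} & D^{-1}R_{22}D\end{pmatrix}.$$
Here the top-left block is unchanged; $R_{12}D$ supplies the last two columns $(-a_ic,\,-a_id)$ for $0\le i\le r-3$; $D^{-1}R_{21}$ supplies the entries $\frac{d}{w},-\frac{c}{w}$ in column $r-2$; and $D^{-1}R_{22}D$ is exactly the $r=2$ computation with $a_0,a_1$ replaced by $a_{r-2},a_{r-1}$. Assembling the four blocks yields the stated $r\times r$ matrix.

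The only real difficulty is bookkeeping: one must identify the four blocks of the companion matrix correctly and keep the indexing of the subdiagonal and of the last column straight, since a single off-by-one would corrupt the result. Once the block decomposition is in place there is no conceptual obstacle, and the bottom-right block is obtained for free from the $r=2$ case.
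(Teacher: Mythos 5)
Your proposal is correct and follows essentially the same route as the paper: both compute $R^E=E^{-1}RE$ by exploiting the block-diagonal form of $E$, the paper doing it in two steps (first $RE$ via rectangular column blocks, then $E^{-1}(RE)$ via rectangular row blocks) while you use the conformal four-block partition $E^{-1}RE=\left(\begin{smallmatrix} R_{11} & R_{12}D\\ D^{-1}R_{21} & D^{-1}R_{22}D\end{smallmatrix}\right)$ in a single step. Your observation that the bottom-right block $D^{-1}R_{22}D$ is literally the $r=2$ computation with $a_0,a_1$ replaced by $a_{r-2},a_{r-1}$ is a small but genuine economy over the paper, which redoes that $2\times 2$ arithmetic inline.
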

\begin{proof}

Suppose $r=2$.  Then 
\begin{eqnarray*}
R^E & = &\frac{1}{w} \left(\begin{array}{cc} d & -b \\ -c & a\end{array}\right)\left(\begin{array}{cc} 0 & -a_0 \\ 1 & -a_1\end{array}\right)\left(\begin{array}{cc} a & b \\ c & d\end{array}\right)\\
& = & \frac{1}{w}\left(\begin{array}{cc} d & -b \\ -c & a\end{array}\right)\left(\begin{array}{cc} -a_0c & -a_0d \\ a-a_1c & b-a_1d\end{array}\right)\\
& = & \frac{1}{w}\left(\begin{array}{cc} d(-a_0c) -b(a-a_1c) & d(-a_0d) -b(b-a_1d)\\
-c(-a_0c) + a(a-a_1c) & -c(-a_0d) + a(b-a_1d)\end{array} \right).
\end{eqnarray*}

For $r>2$, 
$RE = \left(\begin{array}{cc} R_{11} & R_{12} \end{array}\right) 
\left(\begin{array}{cc} I & 0 \\ 0 & D \end{array}\right)
= \left(\begin{array}{cc} R_{11} & R_{12}D \end{array}\right)$, where $R_{11}$ and $R_{12}$ are $r \times (r-2)$ and $r \times 2$ submatrices of $R$, respectively.  Hence, 

$$RE = \left(\begin{array}{ccccccccc} 0& 0 &0 & 0 &\cdots&0 &0 & -a_0 c &-a_0d\\
1& 0 &0 & 0&\cdots&0&0 & -a_1 c &-a_1d\\
0& 1 &0 & 0 &\cdots&0 &0 & -a_2c  &-a_2d\\
\vdots&    &&&&&&\vdots & \vdots\\
0& 0 &0 & 0 &\cdots&1 &0& -a_{r-3}c & -a_{r-3}d \\
0& 0 &0 & 0 &\cdots&0&1 & -a_{r-2}c & -a_{r-2}d \\
0& 0 &0 & 0 &\cdots&0&0& a-a_{r-1}c & b-a_{r-1}d
         \end{array} \right).$$
 Observe that        
\begin{equation*}R^{E} = E^{-1}(RE) = \left(\begin{array}{cc} I & 0 \\ 0 & D^{-1} \end{array}\right)
\left(\begin{array}{c} (RE)_{11} \\ (RE)_{21} \end{array}\right)
= \left(\begin{array}{c} (RE)_{11} \\ D^{-1}(RE)_{21} \end{array}\right),\end{equation*}
\noindent  where $(RE)_{11}$ and $(RE)_{21}$ are $(r-2) \times r$ and $2 \times r$ submatrices of $RE$, respectively.
Hence, the first $r-2$ rows of $R^E$ are identical to those of $RE$, and the last two form the submatrix 
$$\left(\begin{array}{cccccc}
0 &\cdots&0&\frac{d}{w} &\frac{d(-a_{r-2}c) -b(a-a_{r-1}c)}{w} & \frac{d(-a_{r-2}d) -b(b-a_{r-1}d)}{w} \\
0 &\cdots&0&\frac{-c}{w}& \frac{-c(-a_{r-2}c) + a(a-a_{r-1}c)}{w} & \frac{-c(-a_{r-2}d) + a(b-a_{r-1}d)}{w}
         \end{array} \right).$$

\end{proof}

\begin{lem}\label{calculationsmain1}
Assume Hypothesis \ref{hypothesis}.
Let $E_1=\left(
\begin{array}{cc} I & 0\\
         0 & E\end{array} \right)$, where $E$ is as in Lemma \ref{generalcase}.  Given $x,y\in \mathcal{F}$ with $x\neq0$,  let $D(x,y)=\left(
\begin{array}{cc} x & y\\
         0 & 1\end{array} \right)$, i.e.
    $a=x$, $b=y$, $c=0$ and $d=1$ in $D$, and $E(x,y)=\left(
\begin{array}{cc} I & 0\\
         0 & D(x,y)\end{array} \right)$.  
         
(i) If $r>2$ and $s>2$, then
\begin{equation*}
\begin{array}{rcl}
\trace( M^{E_1} N)& = & \trace(MN)
   -a_{r-3}c + a_{r-2}  +b_{s-2} - a_{r-1}b_{s-1} \\ & &+\frac{1}{w}(-a_{r-2}d^2-b^2+a_{r-1}bd +b_{s-3}c - a_{r-2}b_{s-2}c^2 -a^2b_{s-2} \\& &+aa_{r-1}b_{s-2}c -a_{r-2}b_{s-1}cd -abb_{s-1} +aa_{r-1}b_{s-1}d).
 \end{array}
 \end{equation*}
In particular,
 \begin{equation*}
\begin{array}{rcl}
 \trace(M^{E(x,y)} N)& = &\frac{1}{x}(-b_{s-2}x^2-y^2-b_{s-1}xy+a_{r-1}y-a_{r-2})\\ & &+a_{r-2}  +b_{s-2} +\trace(MN).
  \end{array}
 \end{equation*}
  Thus the set $\{\trace(M^{E(x,y)}N)\mid (x,y)\in \mathcal{F}\times\mathcal{F}, x\neq0\}$ has at least $q-1$ elements, and $\{\trace(M^{E(1,y)}N)\mid y\in \mathcal{F}\}$ has at least   $\left\lceil\frac{q}{2}\right\rceil$ elements. 
 
 (ii) If $r=2$ and $s\geq r$, then
\begin{equation*}
\begin{array}{rcl}
\trace( M^{E_1} N) & = &\trace(MN)+a_0 +b_{s-2} -a_1b_{s-1} \\ & &+\frac{1}{w}(-a_0d^2-b^2+a_1bd -b_{s-2}(a_0c^2 +a^2-aa_1c) \\ & &-b_{s-1}(a_0cd+ab-aa_1d)).
\end{array}
\end{equation*}
In particular,
\begin{equation*}
\begin{array}{rcl}
\trace(M^{E(x,y)} N)&= &\frac{1}{x}(-b_{s-2}x^2-y^2 -b_{s-1}xy+a_1y -a_0))\\ & &+ a_0 +b_{s-2} +\trace(MN).
\end{array}
\end{equation*}
 
  Thus the set $\{\trace(M^{E(x,y)}N)\mid (x,y)\in \mathcal{F}\times\mathcal{F}, x\neq0\}$ has at least $q-1$ elements, and $\{\trace(M^{E(1,y)}N)\mid y\in \mathcal{F}\}$ has at least $\left\lceil\frac{q}{2}\right\rceil$ elements. 
    
 (iii) Assume that $r\geq 2$. Let $N_1= \left(
\begin{array}{cc} I & 0\\
         0 & D_1\end{array} \right)$, where $D_1=\left(
\begin{array}{cc} u & 0\\
         0 & v\end{array} \right)$ is a $2\times 2$ matrix where $u\neq v$.
      Then
\begin{equation*}
\begin{array}{rcl}
\trace( M^{E_1} N_1)&= &\trace(MN_1) + \frac{u}{w}(-a_{r-2}cd -ab+a_{r-1}bc)\\ & &+\frac{v}{w}(a_{r-2}cd+ab-aa_{r-1}d + a_{r-1}).
 \end{array}
 \end{equation*}   
Thus
\begin{equation*}
\begin{array}{rcl}
\trace(M^{E(x,y)} N_1)&= & \frac{1}{x}(xy(v-u)-a_{r-1}x+a_{r-1})+\trace(MN_1).
 \end{array}
 \end{equation*}
 Therefore the set  $\{\trace(M^{E(1,y)}N_1)\mid (x,y)\in \mathcal{F}\times\mathcal{F}, x\neq0\}$ has $q$ elements.
       \end{lem}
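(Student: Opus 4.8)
The plan is to localize the trace computation to the trailing blocks and then feed the result into the counting lemmas already in hand; throughout I assume Hypothesis \ref{hypothesis}. The starting observation is that $E_1$ is the identity outside the last $r$ coordinates, so conjugation by $E_1$ alters $M$ only in its trailing $r\times r$ block, replacing $R$ by $R^E$. Hence $M^{E_1}-M$ is supported on that block, where it equals $R^E-R$, and taking traces against $N$ yields the key identity $\trace(M^{E_1}N)=\trace(MN)+\trace((R^E-R)C)$, where $C$ is the trailing $r\times r$ submatrix of $N$. This replaces an $n\times n$ computation by an $r\times r$ one and shows that only the bottom-right corner of $N$ is relevant. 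For part (iii) the same identity applies with $C$ the trailing $r\times r$ block of $N_1$, namely the diagonal matrix with trailing diagonal $1,\dots,1,u,v$.

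Next I would substitute the explicit form of $R^E$ from Lemma \ref{generalcase} and the explicit form of $C$. In parts (i) and (ii), $S$ is the trailing summand of $N$, so its companion shape (subdiagonal ones and final column $-b_0,\dots,-b_{s-1}$) dictates $C$; since $R^E-R$ is nonzero only in its last columns, the trace pairs $R^E-R$ only against the last rows of $C$, and the hypotheses $s>2$ (respectively $s\ge r$) guarantee that those rows lie inside $S$, so that exactly the coefficients $b_{s-1},b_{s-2}$, together with $b_{s-3}$ when $r>2$, enter. Carrying out this finite entrywise sum, separately for the $2\times 2$ shape of $R^E$ when $r=2$ and for the general shape when $r>2$, yields the displayed formulas for $\trace(M^{E_1}N)$. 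Specializing $D$ to $D(x,y)$, that is $a=x$, $b=y$, $c=0$, $d=1$, $w=x$, collapses each formula to one of the shape $\frac{1}{x}(\alpha x^2-y^2+\beta xy+\gamma y+\epsilon)+\delta$, with $\alpha=-b_{s-2}$, $\beta=-b_{s-1}$, $\gamma=a_{r-1}$, $\epsilon=-a_{r-2}$ in case (i) and analogously with $a_0$ in place of $a_{r-2}$ in case (ii). In part (iii) the matrix $C$ is diagonal, so $\trace((R^E-R)C)$ reduces to its diagonal terms; only the $(r-1,r-1)$ and $(r,r)$ entries of $R^E-R$ survive (the upper $r-2$ diagonal entries of $R^E-R$ vanish), and after specialization the trace becomes $\trace(MN_1)+y(v-u)$.

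Finally I would read off the cardinalities. In cases (i) and (ii) the trace, viewed as a function of $(x,y)$ with $x\neq 0$, is precisely of the form $\frac{1}{x}(ax^2-y^2+bxy+cy+e)+d$ treated in Lemma \ref{xysoln}, so its image has at least $q-1$ elements; setting $x=1$ leaves the quadratic $-y^2+(a_{r-1}-b_{s-1})y+\trace(MN)$ in $y$, whose image has at least $\left\lceil\frac{q}{2}\right\rceil$ elements by Lemma \ref{fieldsize}(ii), the leading coefficient $-1$ being nonzero. In case (iii) the trace is an affine function of $y$ with slope $v-u\neq 0$, so as $y$ ranges over $\mathcal{F}$ its image is all of $\mathcal{F}$, giving the full $q$ values. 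I expect none of the difficulty to lie in the counting, which is entirely packaged in Lemmas \ref{xysoln} and \ref{fieldsize}; the main obstacle is the bookkeeping of the middle step, where one must track exactly which entries of $R^E-R$ and of $C$ meet under the trace, keep the $r=2$ and $r>2$ shapes of $R^E$ apart, and verify that the surviving rows of $C$ genuinely come from the block $S$ (so that $b_{s-1},b_{s-2},b_{s-3}$, rather than entries of $N_{11}$, appear), which is exactly where the hypotheses relating $r$ and $s$ are used.
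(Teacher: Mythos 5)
Your proposal is correct and follows essentially the same route as the paper: localize the change to the trailing $r\times r$ block (so that $\trace(M^{E_1}N)-\trace(MN)=\trace((R^E-R)C)$ with $C$ the trailing block of $N$), substitute the explicit form of $R^E$ from Lemma \ref{generalcase} while using $s>2$ (resp. $s\geq r$) to ensure the paired rows come from $S$, and then count values via Lemmas \ref{xysoln} and \ref{fieldsize}(ii). As a minor remark, your simplification in part (iii), $\trace(M^{E(x,y)}N_1)=\trace(MN_1)+y(v-u)$ for all $x\neq 0$, is the correct specialization (the paper's displayed formula there mishandles the $a_{r-1}$ terms, placing them inside the factor $\frac{1}{w}$ and dropping a factor of $v$), and it agrees with the paper's expression at $x=1$, which is all that is needed for the count of $q$ values.
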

       \begin{proof}
       (i) Observe that all but the last three elements of the diagonal of 
       the matrices $MN$ and
       $M^{E_1}N$  have the same values. 
Hence, using the previous result, the last three diagonal values of $M^{E_1}N - MN = (M^{E_1} - M)N $ are
\begin{itemize}
\item $(0 - 0)0 + (-a_{r-3}c - 0)1 + (-a_{r-3}d + a_{r-3})0 = -a_{r-3}c$,
\item $(\frac{d}{w} - 1)0 + (\frac{1}{w}(-a_{r-2}cd -ab+a_{r-1}bc) - 0)0 + (\frac{1}{w}(-a_{r-2}d^2-b^2+a_{r-1}bd) + a_{r-2})1 = \frac{1}{w}(-a_{r-2}d^2-b^2+a_{r-1}bd) + a_{r-2}$, and
\item $(\frac{-c}{w} - 0)(-b_{s-3}) + (\frac{1}{w}(a_{r-2}c^2 +a^2-aa_{r-1}c) - 1)(-b_{s-2}) + (\frac{1}{w}(a_{r-2}cd+ab-aa_{r-1}d) + a_{r-1})(-b_{s-1}) = \frac{1}{w}(b_{s-3}c - a_{r-2}b_{s-2}c^2 -a^2b_{s-2} +aa_{r-1}b_{s-2}c -a_{r-2}b_{s-1}cd -abb_{s-1} +aa_{r-1}b_{s-1}d) +b_{s-2} - a_{r-1}b_{s-1}$.
\end{itemize}

Hence, 
\begin{eqnarray*}\trace(M^{E_1}N) - \trace(MN) & = & \trace(M^{E_1}N - MN) \\ & = &-a_{r-3}c + a_{r-2}  +b_{s-2} - a_{r-1}b_{s-1}\\& & +\frac{1}{w}(-a_{r-2}d^2-b^2+a_{r-1}bd +b_{s-3}c \\& &- a_{r-2}b_{s-2}c^2 -a^2b_{s-2} +aa_{r-1}b_{s-2}c \\& &-a_{r-2}b_{s-1}cd -abb_{s-1} +aa_{r-1}b_{s-1}d).\end{eqnarray*}

Thus, when $a=x$, $b=y$, $c=0$ and $d=1$, 
\begin{eqnarray*}\trace(M^{E_1}N) - \trace(MN) & = & a_{r-2}  +b_{s-2} - a_{r-1}b_{s-1}\\& & +\frac{1}{x}(-a_{r-2}-y^2+a_{r-1}y -b_{s-2}x^2  \\& & -b_{s-1}xy +a_{r-1}b_{s-1}x)\\
&=& \frac{1}{x}(-b_{s-2}x^2-y^2  -b_{s-1}xy +a_{r-1}y-a_{r-2})\\ & &+ a_{r-2} +b_{s-2}. 
\end{eqnarray*}
By Lemma \ref{xysoln}, the set 
$\{\frac{1}{x}(-b{s-2}x^2-y^2  -b_{s-1}xy +a_{r-1}y-a_{r-2})+ a_{r-2} +b_{s-2} +\trace(MN)\mid (x,y)\in \mathcal{F}\times\mathcal{F},
x\neq 0\}$ has at least $q-1$ elements. 

By Lemma \ref{fieldsize} (ii), the set $\{\trace(M^{E(1,y)}N\mid y\in \mathcal{F}\}=
\{-b_{s-2}-y^2 +(-b_{s-1}+a_{r-1})y -a_{r-2} + a_{r-2} +b_{s-2} + \trace(MN)\mid y\in \mathcal{F}\}$ has 
at least  $\left\lceil\frac{q}{2}\right\rceil$ elements.  

       (ii)  In this case, the diagonals of $MN$ and $M^{E_1}N$ have the same values, in all but the last two entries.  Hence, using the previous result, the last two diagonal values of $M^{E_1}N - MN = (M^{E_1} - M)N $ are
\begin{itemize}
\item $(\frac{1}{w}(-a_0cd -ab+a_1bc) - 0)0 + (\frac{1}{w}(-a_0d^2-b^2+a_1bd) + a_0)1$ and
\item $(\frac{1}{w}(a_0c^2 +a^2-aa_1c) - 1)(-b_{s-2}) + (\frac{1}{w}(a_0cd+ab-aa_1d) + a_1)(-b_{s-1})$.
\end{itemize}
Hence, $\trace(M^{E_1}N) - \trace(MN) = \trace(M^{E_1}N - MN) = a_0 +b_{s-2} -a_1b_{s-1} +\frac{1}{w}(-a_0d^2-b^2+a_1bd -b_{s-2}(a_0c^2 +a^2-aa_1c) -b_{s-1}(a_0cd+ab-aa_1d))$.

When $a=x$, $b=y$, $c=0$ and $d=1$, $\trace(M^{E_1}N) - \trace(MN) = a_0 +b_{s-2} -a_1b_{s-1} +\frac{1}{x}(-a_0-y^2+a_1y -b_{s-2}x^2 -b_{s-1}xy+a_1b_{s-1}x)$.
Thus $\trace(M^{E_1}N)= \frac{1}{x}( -b_{s-2} x^2-y^2 -b_{s-1}xy+a_1y -a_0) + a_0 +b_{s-2} + \trace(MN)$. By Lemma \ref{xysoln}
it follows that $\trace(M^{E_1}N)$ can take $q-1$ values.
 
By Lemma \ref{fieldsize} (ii), the set $\{\trace(M^{E(1,y)}N)\mid y\in \mathcal{F}\}=
\{-b_{s-2}-y^2 +(-b_{s-1}+a_1)y -a_0 + a_0 +b_{s-2} + \trace(MN)\mid y\in \mathcal{F}\}$ has 
at least  $\left\lceil\frac{q}{2}\right\rceil$ elements.

       (iii)  In this case, the diagonals of $MN_1$ and $M^{E_1}N_1$ have the same values, in all but the last two entries.  Hence, using the previous result, the last two diagonal values of $M^{E_1}N_1 - MN_1 = (M^{E_1} - M)N_1 $ are
\begin{itemize}
\item $(\frac{1}{w}(-a_{r-2}cd -ab+a_{r-1}bc) - 0)u + (\frac{1}{w}(-a_{r-2}d^2-b^2+a_{r-1}bd) + a_{r-2})0$ and
\item $(\frac{1}{w}(a_{r-2}c^2 +a^2-aa_{r-1}c) - 1)0 + (\frac{1}{w}(a_{r-2}cd+ab-aa_{r-1}d) + a_{r-1})v$.
\end{itemize}
Hence, $\trace(M^{E_1}N_1) - \trace(MN_1) = \trace(M^{E_1}N_1 - MN_1) = \\ \frac{u}{w}(-a_{r-2}cd -ab+a_{r-1}bc)+\frac{v}{w}(a_{r-2}cd+ab-aa_{r-1}d + a_{r-1})$.

When $a=x$, $b=y$, $c=0$ and $d=1$, $\trace(M^{E_1}N_1) - \trace(MN_1) = \frac{-uxy}{x}+\frac{v}{x}(xy-a_{r-1}x + a_{r-1})$.
Thus $\trace(M^{E_1}N_1)= \frac{1}{x}(xy(v-u)-a_{r-1}x+a_{r-1})+\trace(MN_1)$. 
Since $v-u\neq 0$, the set $\{\frac{1}{x}(xy(v-u)-a_{r-1}x+a_{r-1})+\trace(MN_1)\mid x=1, y\in \mathcal{F}\}$ has $q$ elements. 
       \end{proof}

       \begin{lem}\label{calculationsmain2} 
Let $C=\left(\begin{array}{cc}  C_{11}& 0\\
         0 & D_1\end{array} \right)$ be a $n\times n$ matrix,  where $D_1=\left(\begin{array}{cc}  u_1& 0\\
         0 & v_1\end{array} \right)$ is a $2\times2$ matrix where
         $u_1\neq v_1$.
          Let $E=\left(\begin{array}{cc}  I & 0\\
         0 & D \end{array} \right)$ be in $\GL(n,q)$, where $D=\left(\begin{array}{cc} a & b\\c&d\end{array}\right)$
         and $ad-bc=1$. 
         
         Then $C^{E_1}=\left(\begin{array}{cc}  C_{11}& 0\\
         0 & D_1^{D}\end{array}\right)$, where
         $D_1^{D}=\left(
\begin{array}{cc} adu_1-bcv_1& bd(u_1-v_1)\\
         -ac(u_1-v_1) &adv_1-bcu_1\end{array} \right)$. 
         
         Thus, given a matrix $N=\left(\begin{array}{cc}  N_{11}& 0\\
         0 & D_2\end{array} \right)$ in $\GL(n,q)$, where $D_2=\left(\begin{array}{cc}  u_2& 0\\
         0 & v_2\end{array} \right)$ is a $2\times2$ matrix where
         $u_2\neq v_2$, we have that
         $$
         \trace(C^{E} N)= \trace(CN)-u_1u_2-v_1v_2+u_2(adu_1-bcv_1)+v_2(adv_1-bcu_1).
        $$
      In particular, if we fix $x\in \mathcal{F}$, $ad=x$, and $bc=x-1$, we have that
 $$\trace(C^{E} N)= x(u_1-v_1)(u_2-v_2)+ ( \trace(CN) - (u_1-v_1)(u_2-v_2) ).$$
       Therefore given any $f\in \mathcal{F}$, we can find some $x\in \mathcal{F}$ such that
       $\trace(C^{E} N)=f$. 
         \end{lem}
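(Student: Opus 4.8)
The plan is to reduce every assertion to the bottom $2\times 2$ block, exploiting that $C$, $E$, and $N$ are all block diagonal with matching block sizes $(n-2)+2$. First I would observe that conjugation by the block-diagonal matrix $E$ acts block-wise, leaving $C_{11}$ fixed and sending $D_1$ to $D_1^{D}=D^{-1}D_1 D$; thus the first assertion amounts to a single $2\times 2$ computation. Since $\det D = ad-bc = 1$, we have $D^{-1}=\left(\begin{array}{cc} d & -b\\ -c & a\end{array}\right)$, and multiplying out the three factors $D^{-1}D_1 D$ yields exactly the four entries of $D_1^{D}$ displayed in the statement. This establishes the shape of $C^{E}$.

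For the trace formula I would use that both $C^{E}N$ and $CN$ are block diagonal, so each trace splits as $\trace(C_{11}N_{11})$ plus the trace of the bottom $2\times 2$ block. A direct multiplication gives $\trace(D_1 D_2)=u_1u_2+v_1v_2$ and $\trace(D_1^{D}D_2)=u_2(adu_1-bcv_1)+v_2(adv_1-bcu_1)$. Writing $\trace(C_{11}N_{11})=\trace(CN)-\trace(D_1D_2)$ and adding $\trace(D_1^{D}D_2)$ reproduces precisely the stated expression for $\trace(C^{E}N)$.

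The specialization is then pure algebra. Substituting $ad=x$ and $bc=x-1$ (consistent with $ad-bc=1$) into $u_2(adu_1-bcv_1)+v_2(adv_1-bcu_1)$ and collecting the coefficient of $x$ produces the term $x(u_1-v_1)(u_2-v_2)$, while the $x$-free remainder $u_2v_1+u_1v_2$ combines with $-u_1u_2-v_1v_2$ through the identity $-u_1u_2-v_1v_2+u_1v_2+u_2v_1=-(u_1-v_1)(u_2-v_2)$ to give the constant $\trace(CN)-(u_1-v_1)(u_2-v_2)$. This is exactly the claimed affine form in $x$.

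Finally, for surjectivity in $f$, the key point is that $x\mapsto\trace(C^{E}N)$ is affine with leading coefficient $(u_1-v_1)(u_2-v_2)$, which is nonzero precisely because both hypotheses $u_1\neq v_1$ and $u_2\neq v_2$ hold; hence it is a bijection of $\mathcal{F}$ onto itself, and every $f\in\mathcal{F}$ is attained. I do not anticipate a genuine obstacle, but the one detail requiring care is that each prescribed value of $x$ be realizable by an actual $D$ with $E\in\GL(n,q)$: taking $a=x$, $d=1$, $b=1$, $c=x-1$ gives $ad=x$, $bc=x-1$, and $\det D=1$ for every $x\in\mathcal{F}$, so $E$ is invertible (indeed lies in $\SL(n,q)$). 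It is worth noting that $\det D=1$ is used three times over—to invert $D$, to keep $E\in\GL(n,q)$, and to make the substitution $bc=x-1$ consistent—and that the concluding bijectivity genuinely relies on both non-equality hypotheses.
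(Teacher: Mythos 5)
Your proof is correct and follows essentially the same route as the paper's: an explicit computation of $D_1^{D}=D^{-1}D_1D$ using $ad-bc=1$, a block-wise trace computation (the paper phrases it as the last two diagonal entries of $(C^{E}-C)N$, which is the same calculation), and the observation that the specialized expression is affine in $x$ with nonzero slope $(u_1-v_1)(u_2-v_2)$, hence surjective onto $\mathcal{F}$. Your added check that every $x\in\mathcal{F}$ is realized by an actual matrix $D$ of determinant $1$ (e.g.\ $a=x$, $d=1$, $b=1$, $c=x-1$) is a detail the paper leaves implicit, and a welcome one.
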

\begin{proof}     Observe that     
\begin{eqnarray*} 
D_1^D & = & \left(\begin{array}{cc} d & -b \\ -c & a\end{array}\right)\left(\begin{array}{cc} u_1& 0\\
         0 & v_1\end{array}\right)\left(\begin{array}{cc} a & b \\ c & d\end{array}\right)\\
& = & \left(\begin{array}{cc} d & -b \\ -c & a\end{array}\right)\left(\begin{array}{cc} au_1 & bu_1 \\ cv_1 & dv_1\end{array}\right)\\
& = & \left(\begin{array}{cc} adu_1-bcv_1& bd(u_1-v_1)\\
         -ac(u_1-v_1) &adv_1-bcu_1\end{array} \right).
\end{eqnarray*}
         
The diagonals of $C^{E} N$ and $CN$ have the same values, in all but the last two entries.  Hence, the last two diagonal values of $C^{E}N - CN = (C^{E} - C)N $ are
\begin{itemize}
\item $(adu_1-bcv_1 - u_1)u_2 + (bd(u_1-v_1) - 0)0$ and
\item $(-ac(u_1-v_1) - 0)0 + (adv_1-bcu_1 - v_1)v_2$.
\end{itemize}
Hence, 
\begin{equation*}
\begin{array}{rcl}
\trace(C^{E}N)  - \trace(CN) & = &\trace(C^{E}N - CN) \\ &= &(adu_1-bcv_1 - u_1)u_2 + (adv_1-bcu_1 - v_1)v_2.
\end{array}
\end{equation*}

When $ad=x$ and $bc=x-1$, 
\begin{equation*}
\begin{array}{rcl}
 \trace(M^{E}N) - \trace(MN)   &= & (xu_1-(x-1)v_1 - u_1)u_2 \\ & &+ (xv_1-(x-1)u_1 - v_1)v_2  \\ 
 & = &x((u_1-v_1)u_2 + (v_1 - u_1)v_2) \\ & &+ (v_1 - u_1)u_2 + (u_1 - v_1)v_2\\
 &= &x(u_1-v_1)(u_2-v_2) - (u_1-v_1)(u_2-v_2).         
\end{array}
\end{equation*} 
Since $u_1\neq v_1$ and $u_2\neq v_2$, we have that $(u_1-v_1)(u_2-v_2)\neq 0$ and thus the set 
 $ \{ x(u_1-v_1)(u_2-v_2)+ ( \trace(CN) - (u_1-v_1)(u_2-v_2) )\mid x\in \mathcal{F}\}=\mathcal{F}$. 
\end{proof}
 \begin{proof}[Proof of Theorem  A]
 If at least one of $A$, $B$ is in the center ${\bf Z}(\mathcal{G})$ of $\mathcal{G}$, then $A^{\mathcal{G}}B^{\mathcal{G}}=(AB)^{\mathcal{G}}$. Thus we
 may assume that both matrices $A$ and $B$ are not in the center, that is we may assume that
 both $A$ and $B$ are non-scalar matrices. 
 
 Assume Hypothesis \ref{hypothesis}. 
 If $r\geq 2$ or $s\geq 2$, then by Lemma \ref{interchange}, Lemma \ref{mainargument} and 
       Lemma \ref{calculationsmain1} we have that 
        $\eta(A^{\mathcal{G}} B^{\mathcal{G}})\geq q-1$.
        Without loss of generality, we may assume then that for $i=1,\ldots, t$ 
 and $j=1, \ldots,w$,  the polynomials
 $p_i$, $q_j$ have degree 1, that is both $A$ and $B$ are diagonal matrices. Since 
 both $A$ and $B$ are non-scalar matrices, we may assume that $A$ is similar to $C$ and $B$ is similar to $N$, where $C$ and $N$ are as constructed in Lemma \ref{calculationsmain2}.  Hence, by 
 Lemma \ref{calculationsmain2},
  $\eta(A^{\mathcal{G}} B^{\mathcal{G}})\geq q$. 
 \end{proof}
 
 \begin{rem}\label{optimal}
 Let $\mathcal{F}$ be a field with $q = 2^m$ elements, for some integer $m>1$. Set $\mathcal{G}=\GL(2,q)=\GL(2,\mathcal{F})$. 
Let $A=\left(\begin{array}{cc} 1 & 1\\
         0 & 1\end{array} \right)$ and $B=\left(\begin{array}{cc} 0 & 1\\
         -1 & w\end{array} \right)$ in $\mathcal{G}$, where $x^2-wx+1$ is an irreducible polynomial over 
         $\mathcal{F}$. Observe that both $A$ and $B$ are in $\SL(2,q)$ and thus 
         $A^{\GL(2,q)} B^{\GL(2,q)}\subseteq \SL(2,q)$.
         
         By Proposition 2.13 of \cite{sl2q}, $\eta(A^{\SL(2,q)}B^{\SL(2,q)})=q-1$. Since $x\mapsto x^2$ is an automorphism of 
         $\mathcal{F}$, two matrices $C$, $D$ in $\GL(2,q)$ are similar if and only there 
         exists some $H\in \SL(2,q)$ such that $C^H=D$. Thus $\eta(A^{\SL(2,q)}B^{\SL(2,q)})=\eta(A^{\mathcal{G}}B^{\mathcal{G}})=q-1$.
         
         
         \end{rem}

  \begin{proof}[Proof of Theorem  B]
  As in the proof of Theorem A, we may assume that 
 both matrices $A$ and $B$ in $\mathcal{S}=\SL(n,q)$ are not in the center, that is we may assume that
 both $A$ and $B$ are non-scalar matrices. 
 
 We may assume then Hypothesis \ref{hypothesis}. Observe that the matrix $E(1,y)$ in Lemma \ref{calculationsmain1}
 is in $\mathcal{S}$. Thus if $r\geq 2$ or $s\geq 2$, then by Lemma \ref{interchange}, Lemma \ref{mainargument} and 
       Lemma \ref{calculationsmain1} we have that 
        $\eta(A^{\mathcal{S}} B^{\mathcal{S}})\geq \left\lceil\frac{q}{2}\right\rceil  $. As in the proof of
        Theorem A, we may assume then 
        that both $A$ and $B$ are diagonal matrices. Observe that since $ad=x$ and $bc=x-1$, then 
        $ad-bc=1$ and so the matrix $E$ is in $\mathcal{S}$. 
        Since $A$ and $B$ are non-scalar matrices, the result then follows by Lemma \ref{mainargument} and  
       Lemma \ref{calculationsmain2}. 
  \end{proof} 
  \end{section}


\begin{thebibliography}{9}
\bibitem{symmetric}
E. Adan-Bante, H. Verrill, Symmetric groups and conjugacy classes, J. Group Theory 11 (2008), no. 3, 371-379.
 
   
   \bibitem{sl2q}
 E. Adan-Bante, J. M. Harris,  On conjugacy classes of $\SL(2,q)$, preprint.
   \bibitem{GAP4}
  The GAP~Group, \emph{GAP -- Groups, Algorithms, and Programming, 
  Version 4.4.10}; 
  2007,
  \verb+(http://www.gap-system.org)+. 
  
  
 
\end{thebibliography}
\end{document}